\documentclass{amsart}

\usepackage[utf8]{inputenc}
\usepackage[margin=1in]{geometry}
\usepackage{amsmath, amssymb, amsthm}
\usepackage{graphicx}
\usepackage{dsfont}
\usepackage{xcolor}
\usepackage{dirtytalk}
\usepackage{comment}
\usepackage{hyperref}
\usepackage{mathtools}
\usepackage{enumitem}
\usepackage[normalem]{ulem}
\usepackage{tikz}
\usepackage{comment}



\theoremstyle{plain}

\newtheorem*{conjecture}{Conjecture}
\newtheorem{theorem}{Theorem}[section]
\newtheorem{lemma}[theorem]{Lemma}

\newtheorem{preremark}[theorem]{Remark}
\newenvironment{remark}{\begin{preremark}\rm}{\medskip \end{preremark}}

\theoremstyle{definition}
\newtheorem{definition}[theorem]{Definition}

\numberwithin{equation}{section}

\newcommand{\R}{\mathbb R}

\newcommand{\A}{\mathcal A}
\newcommand{\dd}{\, \mathrm{d}}

\DeclareMathOperator*{\Tail}{ {Tail} }

\newcommand{\dps}{(-\Delta_p)^s}

\title[H\"older Gradient Estimates for  $(s,p)$-Poisson]{H\"older Gradient Estimates for  $(s,p)$-Poisson Equations with measurable and Hölder Data}
\author{Davide Giovagnoli and David Jesus}

\newcommand{\Addresses}{{
  \bigskip
  \footnotesize

    Davide Giovagnoli, \textsc{Dipartimento di Matematica,
Universit\`a di Bologna, 
 Bologna, BO 40126, Italy}\par\nopagebreak
  \textit{E-mail address:} {\tt d.giovagnoli@unibo.it}\\

  \medskip

    David Jesus, \textsc{Dipartimento SMFI, Universit\`a di Parma, Parco Area delle Scienze 53/a, Campus, 43124 Parma, Italy}\par\nopagebreak
  \textit{E-mail address:} {\tt davidjoao.brandligtdejesus@unipr.it}\\

 }

}

\hypersetup{hidelinks,pdftitle={Holder gradient estimates for the fractional p-Poisson equation}}
\begin{document}
\setlength{\emergencystretch}{2em}
\begin{abstract}
We prove interior $C^{1,\alpha}$ estimates for weak solutions of the
fractional $p$-Poisson equation, with $p\geq2$. For right-hand sides
in $L^q$, we obtain the estimate when $p<(1-d/q)/(1-s)$. For
$\gamma$-H\"older continuous right-hand sides, the range is
$p<(1+\gamma)/(1-s)$. Both ranges of parameters are optimal. 
\end{abstract}
\maketitle
	\tableofcontents 
    
\section{Introduction}

We consider weak solutions of the equation
\begin{equation}\label{eq:fracplap}
 \dps u=f\quad\text{in }B_2,
\end{equation}
where $s\in(0,1)$, $p\geq2$. Here $d\geq1$, and $B_2 \subset \R^d$ is the ball centered at $0$  with radius $2$. The operator is given by
\[
 \dps u(x)=\operatorname{PV}\int_{\R^d}
 \frac{|u(x)-u(y)|^{p-2}(u(x)-u(y))}{|x-y|^{d+sp}}\,\dd y.
\]

Our first result concerns source terms in Lebesgue spaces. We write $d/q=0$
when $q=\infty$.

\begin{theorem}\label{t:measurable}
Let  $0<s<1$, and $1<q\leq\infty$ satisfy
\begin{equation}\label{eq:measurable-range}
 2 \leq p<\frac{1-d/q}{1-s}.
\end{equation}

Suppose that $u\in W^{s,p}_{\mathrm{loc}}(B_2)
\cap L^{p-1}_{sp}(\R^d)$ is a weak solution of
$\dps u=f$ in $B_2$, with $f\in L^q(B_2)$. Then
$u\in C^{1,\alpha}(B_1)$ and
\begin{equation}\label{eq:measurable-estimate}
 \|u\|_{C^{1,\alpha}(B_1)}
 \leq C\left(\|u\|_{L^\infty(B_2)}
 +{\Tail}_{p-1,sp}(u;2)
 +\|f\|_{L^q(B_2)}^{1/(p-1)}\right),
\end{equation}
where $C$ and $\alpha>0$ depend only on $d,s,p,q$. More precisely,
if $\alpha_0\in(0,1)$ is an exponent in the homogeneous estimate
of \cite[Theorem 1.1]{GJS}, one may take
\begin{equation}\label{eq:measurable-exponent}
 \eta_q=\frac{sp-d/q}{p-1}-1>0,
 \qquad \alpha=\frac{\alpha_0\eta_q}{1+\alpha_0+\eta_q}.
\end{equation}

\end{theorem}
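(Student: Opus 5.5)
We prove the estimate by perturbation from the homogeneous case, using \cite[Theorem 1.1]{GJS}: weak solutions of $\dps v=0$ in $B_2$ are $C^{1,\alpha_0}(B_1)$, with norm controlled by $\|v\|_{L^\infty(B_2)}+\Tail_{p-1,sp}(v;2)$.

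\emph{Reduction.} First normalize. Replacing $u$ by $u/K$ with
\[
K=\|u\|_{L^\infty(B_2)}+\Tail_{p-1,sp}(u;2)+\|f\|_{L^q(B_2)}^{1/(p-1)}
\]
leaves the equation of the same form, because the operator is $(p-1)$-homogeneous and $f$ becomes $f/K^{p-1}$. It therefore suffices to assume $\|u\|_{L^\infty(B_2)}+\Tail\le1$ and $\|f\|_{L^q}\le\varepsilon$, by a further covering and scaling argument.

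\emph{Scaling.} Set $u_r(x)=r^{-1-\beta}\bigl(u(x_0+rx)-\ell(x)\bigr)$, where $\ell$ is affine. Since affine functions do not solve $\dps\ell=0$ pointwise, we instead write $u(x_0+rx)$ as $r^{1+\beta}u_r+\ell$ and track the equation under the rescaling $x\mapsto x_0+rx$. The right-hand side transforms as $r^{sp}f(x_0+r\cdot)$, whose $L^q(B_1)$ norm is $r^{sp-d/q}\|f\|_{L^q}$. Dividing by the natural size $r^{(1+\beta)(p-1)}$ of the $(p-1)$-homogeneous operator leaves a factor $r^{sp-d/q-(1+\beta)(p-1)}$. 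This factor is small precisely when $\beta<\eta_q=\frac{sp-d/q}{p-1}-1$, which is positive exactly under \eqref{eq:measurable-range}. This explains the range of $p$.

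\emph{Iteration.} The key steps are as follows.
\begin{enumerate}[label=(\roman*)]
\item An approximation lemma. If $\|f\|_{L^q}\le\varepsilon$ and $u$ is normalized, then there is a solution $v$ of $\dps v=0$ in $B_1$ with $v=u$ outside $B_1$ such that $\|u-v\|_{L^\infty(B_{1/2})}\le\omega(\varepsilon)$. We prove this by testing with $u-v$ and using the monotonicity inequality for $p\ge2$, which controls $[u-v]_{W^{s,p}}^p$ by $\int f(u-v)$. We then combine fractional Sobolev/Morrey bounds with local boundedness and H\"older continuity of $u-v$ to upgrade to $L^\infty$.
\item From the $C^{1,\alpha_0}$ estimate for $v$, we get an affine $\ell$ such that $|u-\ell|\le C\rho^{1+\alpha_0}+\omega(\varepsilon)$ on $B_\rho$.
\item We iterate at scales $\rho^k$ to obtain affine $\ell_k$ such that $|u-\ell_k|\le\rho^{k(1+\alpha)}$ on $B_{\rho^k}$.
\end{enumerate}

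\emph{Main obstacle.} The main difficulty is the nonlocal tail in the rescaled problem. The affine parts $\ell_k$ grow at infinity, and the differences $u-\ell_k$ are only controlled on the small balls. So the tail of $u_k=\rho^{-k(1+\alpha)}(u-\ell_k)(\rho^k\cdot)$ must be estimated by summing over dyadic annuli. On the annulus at scale $\rho^{-j}$, the function is of size about $\rho^{-j(1+\alpha)}$. Convergence requires $1+\alpha<sp/(p-1)$, which holds since $\alpha<\eta_q$. Moreover, since $\dps$ is not invariant under subtracting affine functions, we must compare with $\dps(v+\ell)=0$ rather than with a homogeneous equation. The correct framework is therefore to approximate by solutions of $\dps w=0$ that are close to $\ell_k+$small, applying \cite[Theorem 1.1]{GJS} to $w$ directly at each scale. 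This is where the exponent gets degraded.

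\emph{The exponent.} We balance the gain $\rho^{\alpha_0}$ from the homogeneous estimate against the error $\rho^{\eta_q}$ from the source, choosing the intermediate scale optimally. A trade-off between $r^{\alpha_0}$ and $r^{\eta_q}$ with a compound loss $1+\alpha_0+\eta_q$ yields exactly $\alpha=\alpha_0\eta_q/(1+\alpha_0+\eta_q)$, as in \eqref{eq:measurable-exponent}.

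\emph{Conclusion.} The sequence $\ell_k$ converges at a geometric rate, and Campanato's characterization then gives $u\in C^{1,\alpha}(B_1)$. Undoing the normalization by $K$ yields \eqref{eq:measurable-estimate}.
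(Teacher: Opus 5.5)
There is a genuine gap. Your step (iii) is an improvement-of-flatness iteration, and it cannot be closed with \cite[Theorem 1.1]{GJS}. That estimate bounds the $C^{1,\alpha_0}$ norm of an $(s,p)$-harmonic $w$ by $\|w\|_{L^\infty}+\Tail(w)$. Since $\dps$ does not commute with subtracting affine functions, it cannot be applied to $(w-\ell_k)/\rho^{k(1+\alpha)}$. You notice this, but ``applying it to $w$ directly at each scale'' does not fix it. Applied to $(w-u(x_0))/\rho^k$ on $B_{\rho^k}(x_0)$, whose size is governed by the slope rather than the flatness, it gives
\[
|w(x)-w(x_0)-\nabla w(x_0)\cdot(x-x_0)|\le CA\rho^{-k\alpha_0}|x-x_0|^{1+\alpha_0},
\]
i.e.\ an error $CA\rho^{k}\rho^{1+\alpha_0}$ on $B_{\rho^{k+1}}(x_0)$. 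For fixed $\rho$ this is not $\le\rho^{(k+1)(1+\alpha)}$ once $k$ is large.

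The missing idea is that no iteration is needed. For each $x_0$ and each small $r$, compare $u$ once with its homogeneous replacement $h_R$ in $B_R(x_0)$, at the much larger scale $R=r^{(1+\alpha_0)/(1+\alpha_0+\eta_q)}$. Then use $\|u-h_R\|_{L^\infty(B_R(x_0))}\le CAR^{1+\eta_q}$ together with $[\nabla h_R]_{C^{0,\alpha_0}(B_{R/2}(x_0))}\le CAR^{-\alpha_0}$:
\[
\sup_{B_r(x_0)}|u(x)-u(x_0)-\nabla h_R(x_0)\cdot(x-x_0)|\le CA\bigl(R^{1+\eta_q}+r^{1+\alpha_0}R^{-\alpha_0}\bigr)\le CAr^{1+\alpha}.
\]
The slopes depend on $r$ and are assembled by Lemma~\ref{l:affine-approximation}. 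This balance is exactly where $\alpha=\alpha_0\eta_q/(1+\alpha_0+\eta_q)$ comes from. Your paragraph on the exponent states the formula, but nothing in (i)--(iii) produces it.

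Two quantitative inputs are also missing.

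\emph{A Lipschitz bound.} The gradient bound $\|\nabla h_R\|_{L^\infty(B_{R/2}(x_0))}+R^{\alpha_0}[\nabla h_R]_{C^{0,\alpha_0}(B_{R/2}(x_0))}\le CA$ requires $\|h_R-u(x_0)\|_{L^\infty(B_R(x_0))}\le CAR$ and a matching bound on $\Tail_{p-1,sp}(h_R-u(x_0);x_0,R)$. That is a Lipschitz bound on $u$, which is not available a priori. The paper obtains it by first running the same two-scale argument with normalization $R^\theta$, starting from the $C^{0,\theta}$ bounds for every $\theta<1$ of \cite[Theorem 1.4]{brasco2018higher}. With $\theta=1-\alpha_0\eta_q/2$ this yields $C^{1,\beta}$ for some $\beta>0$, hence Lipschitz, and only then the final step. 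The tail integrals converge because $sp>p-1$.

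\emph{A sharp comparison rate.} The comparison must hold as $\|u-h\|_{L^\infty(B_R(x_0))}\le CR^{(sp-d/q)/(p-1)}\|f\|_{L^q(B_R(x_0))}^{1/(p-1)}$. Lemma~\ref{l:comparison-Lq} proves this by Stampacchia truncation with $(z-k)_+$, where $z=u-h\in W^{s,p}_0(B_R(x_0))$. Your route in (i), energy estimate plus interpolation with H\"older continuity, gives only $\omega(\varepsilon)=C\varepsilon^{\kappa/(p-1)}$ with $\kappa<1$ when $sp\le d$. That would still yield some $C^{1,\alpha}$, but with $\kappa\eta_q$ in place of $\eta_q$, so not the exponent in \eqref{eq:measurable-exponent}.
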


Our next result concerns H\"older continuous source terms.

\begin{theorem}\label{t:holder}
Let $s\in(0,1)$, $0<\gamma\leq1$, and
\begin{equation}\label{eq:holder-range}
 2\leq p<\frac{1+\gamma}{1-s}.
\end{equation}
Assume that $u\in W^{s,p}_{\mathrm{loc}}(B_2)
\cap L^{p-1}_{sp}(\R^d)$ is a weak solution of
\eqref{eq:fracplap}, with $f\in C^{0,\gamma}(B_2)$.
Then $u\in C^{1,\alpha}(B_1)$ for some $\alpha>0$, and
\begin{equation}\label{eq:main-estimate}
 \|u\|_{C^{1,\alpha}(B_1)}
 \leq C\left(\|u\|_{L^\infty(B_2)}
 +{\Tail}_{p-1,sp}(u;2)
 +\|f\|_{C^{0,\gamma}(B_2)}^{1/(p-1)}\right).
\end{equation}
Here $C$ and $\alpha$ depend only on $d,s,p,\gamma$.
More precisely,
if $\alpha_0\in(0,1)$ is an exponent in the homogeneous estimate
of \cite[Theorem 1.1]{GJS}, one may take
\begin{equation}\label{eq:exponent}
 \eta_\gamma=\frac{sp+\gamma}{p-1}-1>0,
 \qquad \alpha=\frac{\alpha_0\eta_\gamma}{1+\alpha_0+\eta_\gamma}.
\end{equation}
\end{theorem}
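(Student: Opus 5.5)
The plan is a perturbation argument around the homogeneous estimate of \cite[Theorem 1.1]{GJS}: $p$-fractional harmonic functions are $C^{1,\alpha_0}$ with the natural estimate. Since $f$ is Hölder continuous, we can no longer simply compare $u$ with a harmonic replacement at the level of $f$ itself. Instead, at each point we remove the constant value $f(x_0)$.

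\emph{Step 1: normalization.} By scaling and normalization we may assume $\|u\|_{L^\infty(B_2)}+\Tail_{p-1,sp}(u;2)\le 1$ and $\|f\|_{C^{0,\gamma}(B_2)}\le\delta$, with $\delta$ small. The equation is invariant under $u\mapsto \lambda u$, $f\mapsto\lambda^{p-1}f$, which explains the exponent $1/(p-1)$ in \eqref{eq:main-estimate}. In this normalized setting, $f$ is nearly constant at small scales: $|f(x)-f(x_0)|\le\delta r^\gamma$ in $B_r(x_0)$.

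\emph{Step 2: reduction to the Lebesgue case at each scale.} Around a fixed $x_0\in B_1$ we write $f=f(x_0)+(f-f(x_0))$. The constant part is handled by subtracting an explicit profile. Let $\phi$ be a fixed smooth function, for example $c|x|^{\beta}$-type truncated or a solution of $\dps\phi=1$ in $B_2$, which is known to be $C^{1,\alpha}$ for $sp>p-1$, i.e.\ $p<1/(1-s)$. This is where care is needed: $\dps$ is nonlinear, so subtracting is not exact. I would therefore instead run the iteration directly with the approximation lemma (Step 3), treating $f$ as a bounded source with $\|f\|_{L^\infty(B_r)}\lesssim |f(x_0)|+\delta r^\gamma$.

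\emph{Step 3: improvement of flatness.} Suppose $u$ is $\varepsilon$-close in $B_1$ to a solution of the homogeneous problem. The approximation lemma, proved by compactness via weak stability and the uniform Hölder estimates, gives a $C^{1,\alpha_0}$ harmonic replacement $h$. Then $|u-\ell|\le \rho^{1+\alpha}$ in $B_\rho$ for an affine $\ell$. Iterating needs the rescaled function
\[ u_k(x)=\frac{u(x_0+\rho^k x)-\ell_k(x_0+\rho^kx)}{\rho^{k(1+\alpha)}} \]
to solve an equation with small right-hand side. Here is the obstacle. Subtracting an affine function does not commute with $\dps$ for $p>2$; one needs the GJS-type framework where solutions are compared together with their linear parts, i.e.\ equations of the form $\dps(\ell+v)$. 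Under this scaling the source scales by $\rho^{k(sp-(1+\alpha)(p-1))}$. Using only $|f|\le C$, the source stays small exactly when $sp-(1+\alpha)(p-1)>0$, i.e.\ $\alpha<\eta_\infty$, which matches the $q=\infty$ case of Theorem~\ref{t:measurable}.

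To reach the improved range $\eta_\gamma=(sp+\gamma)/(p-1)-1$, we gain $\rho^{k\gamma}$ from the oscillation of $f$ and must dispose of the constant $f(x_0)$. I would handle this by a dichotomy on the size of $|\nabla u(x_0)|$ relative to the scale, in the spirit of the degenerate/nondegenerate alternative.
\begin{itemize}
\item[(a)] If the gradient is large, the operator linearizes to a uniformly elliptic fractional operator, and Hölder data yield $C^{1,\alpha}$ by classical theory.
\item[(b)] If the gradient stays small down to scale $r$, then the equation at $x_0$ forces $|f(x_0)|\lesssim r^{\gamma}$ by testing against the approximation. So $f$ effectively behaves like $\delta r^\gamma$, giving the exponent $sp+\gamma-(1+\alpha)(p-1)>0$.
\end{itemize}

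Balancing the approximation error $\varepsilon$ against $\rho^{\alpha_0}$ and the source decay $\rho^{\eta_\gamma}$ leads to the choice $\alpha=\alpha_0\eta_\gamma/(1+\alpha_0+\eta_\gamma)$. Finally, a standard Campanato characterization converts the pointwise affine approximation at all $x_0\in B_1$ into the $C^{1,\alpha}$ bound \eqref{eq:main-estimate}. I expect step (b), controlling $f(x_0)$ at degenerate points, to be the main difficulty.
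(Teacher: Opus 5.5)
There is a genuine gap, and it sits in step (b), exactly where the constant $f(x_0)$ has to be removed. Your claim that a small gradient at $x_0$ down to scale $r$ forces $|f(x_0)|\lesssim r^\gamma$ is false.

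A counterexample: take $u_\beta=\min\{|x|^\beta,1\}$ from Remark~\ref{rmk:optimality} with $\beta=(sp+\gamma)/(p-1)<2$ (e.g.\ $p=2$, $1<2s+\gamma<2$).
\begin{itemize}
\item $\nabla u_\beta(0)=0$, and $u_\beta$ is flat of order $1+\eta_\gamma$ at the origin at every scale.
\item $f_\beta=\dps u_\beta$ is $C^{0,\gamma}$ near $0$.
\item Yet $f_\beta(0)=-\int_{\R^d}u_\beta(y)^{p-1}|y|^{-d-sp}\,\dd y$ is a fixed negative number, independent of $r$.
\end{itemize}
In your rescaled picture, $u_r=(u(x_0+r\,\cdot)-u(x_0))/r^{1+\alpha}$ solves an equation with source $r^{-\kappa}f(x_0+r\,\cdot)$, where $\kappa=(1+\alpha)(p-1)-sp>0$ as soon as $\alpha>\eta_\infty=sp/(p-1)-1$. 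This large constant is balanced by the far field: $\Tail_{p-1,sp}(u_r;1)^{p-1}$ is of order $r^{-\kappa}$ even when $u$ is perfectly flat at all intermediate scales. So nothing forces $f(x_0)$ to be small. Worse, your normalized rescalings do not stay in a class where an approximation or compactness lemma applies.

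The other ingredients are not available off the shelf either. Estimates for $\dps(\ell+v)$ that are uniform in the slope of $\ell$ are not known; affine functions are not even in $L^{p-1}_{sp}(\R^d)$ when $p\ge 1/(1-s)$, which lies inside \eqref{eq:holder-range}. In (a), linearizing at a large gradient produces a kernel like $|e\cdot(x-y)|^{p-2}|x-y|^{-d-sp}$. It has order $2-p(1-s)$, is degenerate along $e^\perp$, and depends on $u$ itself, so there is no classical Schauder theory to quote.

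The missing idea is that the constant part of the source never needs to be small. Lemma~\ref{l:homogeneous-tail} notes that the proof in \cite{GJS} works verbatim for $\dps v=c$, with constants independent of $c$: the constant disappears when the equation is differentiated and cancels in the Ishii--Lions step. The resulting $C^{1,\alpha_0}$ bound needs only $\|v\|_{L^\infty(B_2)}$ and the weaker tail $\Tail_{p-1,sp+1}(v;2)$.

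The paper then argues in three steps.
\begin{enumerate}
\item \emph{Freeze $f$.} Let $h_R$ solve $\dps h_R=f(x_0)$ in $B_R(x_0)$ with $h_R=u$ outside. The quantitative comparison Lemma~\ref{l:comparison-Lq} is a Stampacchia truncation, with no compactness and no linearization. It gives $\|u-h_R\|_{L^\infty(B_R(x_0))}\le CR^{sp/(p-1)}\|f-f(x_0)\|_{L^\infty(B_R(x_0))}^{1/(p-1)}\le C[f]_{C^{0,\gamma}(B_2)}^{1/(p-1)}R^{1+\eta_\gamma}$.
\item \emph{Control the rescaling.} The a priori Lipschitz bound of \cite{biswas2025improved}, available because $sp+\gamma>p-1$, makes $H_R=(h_R(x_0+Rz/2)-u(x_0))/R$ bounded with bounded $(sp+1)$-tail, using $p<2/(1-s)$. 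This holds even though its $sp$-tail and its source $2^{-sp}R^{sp-p+1}f(x_0)$ may blow up as $R\to0$. This is precisely how the difficulty in your step (b) is neutralized.
\item \emph{Balance.} Choosing $R=r^{(1+\alpha_0)/(1+\alpha_0+\eta_\gamma)}$ balances the error $R^{1+\eta_\gamma}$ against the Taylor remainder $R^{-\alpha_0}r^{1+\alpha_0}$. This gives the stated $\alpha$, and Lemma~\ref{l:affine-approximation} concludes.
\end{enumerate}
No dichotomy on $|\nabla u(x_0)|$ is needed.
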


    In both Theorems \ref{t:measurable}  and \ref{t:holder} the range of $p$ for which solutions are $C^{1,\alpha}$ is optimal. We illustrate this with an example from \cite{brasco2018higher}.

\begin{remark}\label{rmk:optimality}
For $\beta\in(0,2)$, consider
\[
u_\beta(x)=\min\{|x|^\beta,1\}.
\]
For the exponents used below,
$u_\beta\in W^{s,p}(B_1)\cap L_{sp}^{p-1}(\R^d)$.
Writing $f_\beta=\dps u_\beta$, we have
\begin{equation}\label{eq:example}
\dps u_\beta=f_\beta\quad\text{in }B_{1/2}
\end{equation}
in the weak sense, see Section \ref{s:preli} for precise definitions.

First, let $\max\{1,d/(sp)\}<q\leq\infty$ and take
\[
\beta=1+\eta_q+\varepsilon
=\frac{sp-d/q}{p-1}+\varepsilon,
\]
with $\varepsilon>0$ sufficiently small that $0<\beta<2$.
If $q<\infty$, choose also
$\varepsilon<d/(q(p-1))$. Arguing as in \cite{brasco2018higher}, we get
\[
f_\beta(x)
=c_\beta |x|^{-d/q+\varepsilon(p-1)}+g_\beta(x),
\qquad
c_\beta<0,\quad g_\beta\in L^\infty(B_{1/2}).
\]
Consequently, $f_\beta\in L^q(B_{1/2})$.
When $q=\infty$, the inequality $\beta(p-1)>sp$
instead gives $f_\beta\in L^\infty(B_{1/2})$ directly.

Suppose that all solutions with $L^q$ right-hand side
belong to $C^{1,\alpha}_{\mathrm{loc}}$, for some
$\alpha\in(0,1)$ depending only on $d,s,p,q$.
Since $0<\beta<2$, the function $u_\beta$ belongs to
$C^{1,\alpha}$ near zero only if $\beta\geq1+\alpha$.
Thus
\[
\alpha\leq\eta_q+\varepsilon.
\]
Letting $\varepsilon\downarrow0$, we obtain the necessary
bound
\[
\alpha\leq\eta_q=\frac{sp-d/q}{p-1}-1.
\]
In particular, the existence of such a positive exponent
requires
\[
p<\frac{1-d/q}{1-s}.
\]

For H\"older continuous sources, let $0<\gamma\leq1$ and
first suppose that
\[
1<\beta=1+\eta_\gamma
=\frac{sp+\gamma}{p-1}<2.
\]
A direct calculation gives
\[
f_\beta(x)
=f_\beta(0)+c_\beta|x|^\gamma+g_\beta(x),
\qquad g_\beta\in C^{0,1}(B_{1/2}).
\]
Hence $f_\beta\in C^{0,\gamma}(B_{1/2})$, while
$u_\beta\notin C^{1,\alpha}(B_{1/2})$ whenever
$\alpha>\beta-1$. Therefore, any gradient H\"older
exponent valid for all such solutions must satisfy
\[
\alpha\leq\eta_\gamma
=\frac{sp+\gamma}{p-1}-1.
\]

Finally, suppose that $sp+\gamma\leq p-1$.
For any proposed exponent $\alpha\in(0,1)$, choose
$0<\varepsilon<\alpha$ and use $u_{1+\varepsilon}$.
Since
\[
(1+\varepsilon)(p-1)-sp
\geq\gamma+\varepsilon(p-1),
\]
its source belongs to $C^{0,\gamma}(B_{1/2})$,
whereas $u_{1+\varepsilon}\notin C^{1,\alpha}(B_{1/2})$.
This excludes the borderline as well and shows that
\[
p<\frac{1+\gamma}{1-s}
\]
is necessary for a positive gradient H\"older exponent
depending only on the parameters.
\end{remark}

The remark above shows that an exponent $\alpha\in(0,1)$ valid for all solutions of \eqref{eq:fracplap} with $L^q$ or $C^\gamma$ right-hand sides cannot exceed $\eta_q$ or $\eta_\gamma$, respectively. This suggests the following conjecture for a bounded right-hand side, which by analogy with the local case (see \cite{Araujo_Teixeira_Urbano,Araujo_Teixeira_Urbano2}), we will call the $C^{sp'}$-conjecture. A more general conjecture for $L^q$ (see \cite{lindgren2017regularity}) or $C^\gamma$ right-hand sides could also be included. 

\begin{conjecture}[$C^{sp'}$-conjecture]
\label{sp'-conjecture}
~ Every solution to the equation $\dps u = f \in L^{\infty}$ is locally of class $C^{\frac{sp}{p-1}}$ whenever $p \neq \frac{1}{1-s}$.
\end{conjecture}

Here and below, when \(1<\sigma<2\), the notation \(C^\sigma\) stands for \(C^{1,\sigma-1}\).
This conjecture is known to be true for $p>\frac{1}{1-s}$, see \cite[Theorem 1.1]{biswas2025lipschitz}, and its optimality is ensured by Remark \ref{rmk:optimality}.
In the critical case $p = \frac{1}{1-s}$, in  \cite[Theorem 1.2]{biswas2025lipschitz}  the authors prove solutions satisfy a modulus of continuity of the form $Cr(1+|\log r|^\vartheta)$, for some $\vartheta>0$. The Lipschitz endpoint fails already for $p=2$ and $s=1/2$, since there is a solution $u$ of $(-\Delta)^{1/2}u\in L^{\infty}$ which is not Lipschitz, see \cite[Example 1.6]{brasco2018higher}.

In the range $1<p<\frac{1}{1-s}$, that is $\frac{sp}{p-1}>1$, the conjecture is open.

\medskip
\noindent \textbf{Known literature. }For the homogeneous equation in the degenerate range $2\leq p< 2/(1-s)$, $C^{1,\alpha_0}$ regularity was recently obtained in \cite{GJS}. Regarding inhomogeneous equations, the results depend on the assumptions on the source term. When $f\in L^q$ and $1<q<\infty$, H\"older regularity with precise exponents was established in \cite{brasco2018higher,garain2024higher} where the authors show that $u \in C^\theta$ for any $\theta<1$, in the range $1<p<(1-d/q)/(1-s)$.
For Sobolev regularity see the works \cite{bogelein2024gradient,bogelein2024regularity,bogelein2025gradient}. For boundary regularity see \cite{iannizzotto2020fine,iannizzotto2026boundary}.

When the source term is in $L^\infty\cap\, C^{0,\gamma}$ for some $\gamma\in[0,1]$, then Lipschitz regularity was proved in \cite{biswas2025lipschitz,biswas2025improved} for $1<p<(1+\gamma)/(1-s)$.

In the homogeneous degenerate parabolic setting, \cite{JSU}
established existence, uniqueness, equivalence between weak and
viscosity solutions, and Lipschitz regularity. More recently,
\cite{GJS_Caloric} proved $C^{1,\alpha_0}$ regularity.

\medskip
\noindent \textbf{Description of the argument. }  We now describe the idea behind Theorem \ref{t:measurable}. We want to compare the solution $u$ with its homogeneous replacement in a ball $B_R$. Due to the nonlinearity of the operator, the difference of these solutions does not solve the equation with the difference of the sources. We overcome this difficulty, by obtaining a new quantitative comparison estimate in Lemma \ref{l:comparison-Lq}, which we believe is interesting on its own. Using this comparison estimate, we are able to bound the difference of $u$ and its homogeneous replacement by the factor $R^{(sp-d/q)/(p-1)}$ which is superlinear precisely in the range of Theorem \ref{t:measurable}.
Using the known regularity for solutions with measurable data and the \(C^{1,\alpha_0}\) regularity of \((s,p)\)-harmonic functions, we first obtain a Lipschitz bound and then refine it to the desired \(C^{1,\alpha}\) estimate.

Regarding Theorem \ref{t:holder}, the argument is similar to the one above, except we compare $u$ with the solution of the equation with constant source term $f(x_0)$ in a ball $B_R(x_0)$. Using the $C^{0,\gamma}$ regularity of $f$, the error is of order
$R^{(sp+\gamma)/(p-1)}$, which is superlinear precisely in the range
of Theorem~\ref{t:holder}. 

The proof of the homogeneous H\"older gradient regularity of \cite{GJS} adapts to a constant right-hand side with an estimate depending only on the Lipschitz norm and the smaller tail of order $sp+1$ of the solution. This extra power of the tail allows us to obtain the result in the full range in \eqref{eq:holder-range}.

\section{Preliminaries}\label{s:preli}

We use the same tail convention as in \cite{GJS}, namely
\begin{equation}\label{eq:tail}
 {\Tail}_{m,\lambda}(v;x_0,R)
 =\left(\int_{B_R(x_0)^c}
 \frac{|v(y)|^m}{|y-x_0|^{d+\lambda}}\,\dd y\right)^{1/m},
 \qquad m,\lambda>0.
\end{equation}
We omit the center when it is zero.

For \(m,\lambda>0\), we denote by \(L_\lambda^m(\R^d)\) the space of functions in
$L^{m}_{\mathrm{loc}}(\R^d)$ for which 
\[ 
\int_{\R^d}\frac{|v(y)|^m}{(1+|y|)^{d+\lambda}}\,dy<\infty. 
\]

The condition $u \in L_{sp}^{p-1}(\R^d)$ is a standard requirement to make sense of $(-\Delta_p)^s u$. See \cite[Lemma 2.4]{GJS} for some relations about tail functions.

We recall the scaling property of the tail function.
\begin{lemma}\label{l:tail_scale}
Let $m,\lambda>0$ and $u\in L^m_\lambda(\R^d)$.
For $r,R,M>0$ and $x_0\in\R^d$, define
\[
v(x)=\frac{u(Rx+x_0)}{M}.
\]
Then
\[
{\Tail}_{m,\lambda}(v;r)
=\frac{R^{\lambda/m}}{M}
{\Tail}_{m,\lambda}(u;x_0,Rr).
\]
\end{lemma}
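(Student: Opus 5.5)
This is a direct change of variables in the integral defining the tail. By \eqref{eq:tail} with center $0$,
\[
{\Tail}_{m,\lambda}(v;r)^m=\int_{B_r^c}\frac{|v(y)|^m}{|y|^{d+\lambda}}\,\dd y
=\frac{1}{M^m}\int_{B_r^c}\frac{|u(Ry+x_0)|^m}{|y|^{d+\lambda}}\,\dd y .
\]
I will substitute $z=Ry+x_0$. Then $\dd y=R^{-d}\,\dd z$ and $|y|=|z-x_0|/R$. Since $R>0$, the condition $|y|\geq r$ is equivalent to $|z-x_0|\geq Rr$, so $B_r^c$ is mapped bijectively onto $B_{Rr}(x_0)^c$.

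Inserting these relations gives
\[
{\Tail}_{m,\lambda}(v;r)^m=\frac{1}{M^m}\int_{B_{Rr}(x_0)^c}\frac{|u(z)|^m\,R^{d+\lambda}}{|z-x_0|^{d+\lambda}}\,R^{-d}\,\dd z
=\frac{R^{\lambda}}{M^m}\,{\Tail}_{m,\lambda}(u;x_0,Rr)^m .
\]
Taking $m$-th roots yields ${\Tail}_{m,\lambda}(v;r)=\frac{R^{\lambda/m}}{M}{\Tail}_{m,\lambda}(u;x_0,Rr)$, which is the claim.

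It remains only to check that both sides are finite, so that the identity is between real numbers. The assumption $u\in L^m_\lambda(\R^d)$ means $\int|u|^m(1+|y|)^{-d-\lambda}\,\dd y<\infty$. Outside the ball $B_{Rr}(x_0)$ the weights $|z-x_0|^{-d-\lambda}$ and $(1+|z|)^{-d-\lambda}$ are comparable, with constants depending on $Rr$ and $x_0$. Hence ${\Tail}_{m,\lambda}(u;x_0,Rr)<\infty$, and by the computation above the tail of $v$ is finite as well.

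No step is a real obstacle. The only things to watch are that the factors $R^{d+\lambda}$ and $R^{-d}$ combine to $R^{\lambda}$, and that this becomes $R^{\lambda/m}$ after taking the $m$-th root.
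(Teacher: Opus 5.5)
Your proposal is correct and follows the paper's proof: the same substitution $z=Rx+x_0$, with $R^{d+\lambda}$ from the kernel and $R^{-d}$ from the Jacobian combining to $R^{\lambda}$, and then taking $m$-th roots. The finiteness check you add, comparing $|z-x_0|^{-d-\lambda}$ with $(1+|z|)^{-d-\lambda}$ outside $B_{Rr}(x_0)$, is a harmless extra that the paper leaves implicit.
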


\begin{proof}
The change of variables $y=Rx+x_0$ gives
\[
\begin{aligned}
{\Tail}_{m,\lambda}(v;r)^m
&=\frac1{M^m}\int_{B_r^c}
  \frac{|u(Rx+x_0)|^m}{|x|^{d+\lambda}}\,\dd x\\
&=\frac{R^\lambda}{M^m}
  \int_{\R^d\setminus B_{Rr}(x_0)}
  \frac{|u(y)|^m}{|y-x_0|^{d+\lambda}}\,\dd y\\
&=\frac{R^\lambda}{M^m}
  {\Tail}_{m,\lambda}(u;x_0,Rr)^m
\end{aligned}
\]
which gives the desired equality.
\end{proof}

In order to recall the notion of weak solution, we define the following fractional Sobolev space 
\[
W^{s, p}(\Omega)=\left\{ v\in L^p(\Omega)\; :\;[v]_{W^{s,p}(\Omega)}:=\iint_{\Omega\times \Omega} \frac{|v(x)-v(y)|^p}{|x-y|^{d+sp}}\dd x\dd y<\infty \right\},
\]
and the corresponding spaces
\[
W^{s, p}_{loc}(\Omega)=\{ v\in L^p_{loc}(\Omega)\; :\;  v\in W^{s, p}(\widetilde\Omega)\; \text{for any}\; \widetilde\Omega\Subset \Omega \},
\]
and
\[
W_0^{s, p}(\Omega)=\left\{ v \in W^{s, p}(\R^d) \, : \, v=0 \text{ a.e. in }\R^d \setminus \Omega \right\}. 
\]

\begin{definition}\label{DefWeakSol}
A function $u\in W^{s, p}_{ loc}(\Omega)\cap L^{p-1}_{sp}(\R^d)$ is said to be a weak solution of
\[
(-\Delta_p)^s u=f\quad\text{in }\Omega 
\]
if we have
$$ \frac12\iint_{\R^d\times \R^d} J_p(u(x)-u(y))(\varphi(x)-\varphi(y))|x-y|^{-d-sp}\dd x\dd y=\int_{\Omega}f\varphi\,\dd x,$$
for all $\varphi\in C^\infty_c(\Omega)$, where $J_p(t)=|t|^{p-2}t$.
\end{definition}

We will repeatedly use the inequalities
\begin{align}
 (J_p(a)-J_p(b))(a-b)&\geq2^{2-p}|a-b|^p,
 \label{eq:monotonicity}\\
 \big||a+b|^p-|a|^p\big|
 &\leq C_p\bigl(|a|^{p-1}|b|+|b|^p\bigr),
 \label{eq:energy-difference}\\
 |a+b|^p-|a|^p-pJ_p(a)b&\geq2^{2-p}|b|^p.
 \label{eq:convexity}
\end{align}
They hold for all $a,b\in\R$ and $p\geq2$. The first follows by
integrating $J_p'$ along the interval between $a$ and $b$. The second is the mean value
inequality. Integrating the first along the segment from $a$ to $a+b$
gives the third.

We recall also the classical fractional Sobolev-Morrey inequalities, which can be found in \cite[Chapter 7]{Adams}.

\begin{lemma} \label{l:sob_embed}
    For $1<p<\infty$ and $s \in (0,1)$, let $\Omega\subset\mathbb{R}^d$ be an open and bounded set. 
    If \(sp<d\),  define the fractional critical Sobolev exponent by \begin{equation}\label{eq:fractional-critical-exponent} p_s^*:=\frac{dp}{d-sp}. \end{equation}
    For every $u\in W_0^{s,p}(\Omega)$ the following estimates hold 
    \begin{align} 
    \|u\|_{L^{p_s^*}(\Omega)}^p &\leq C [u]_{W^{s,p}(\mathbb{R}^d)}^p, &&\text{if }sp<d, \label{eq:fractional-Sobolev-subcritical}
    \\ 
    \|u\|_{L^q(\Omega)}^p &\leq C |\Omega|^{\frac{p}{q}+\frac{sp}{d}-1} [u]_{W^{s,p}(\mathbb{R}^d)}^p, &&\text{for every \(1\leq q<\infty\), if }sp=d, \label{eq:fractional-Sobolev-critical}
    \\
    \|u\|_{L^\infty(\Omega)}^p &\leq C |\Omega|^{\frac{sp}{d}-1} [u]_{W^{s,p}(\mathbb{R}^d)}^p, &&\text{if }sp>d. \label{eq:fractional-Morrey} \end{align}
    Here the constants $C$ depend only on $d,\, s, \, p$ and $q$.
\end{lemma}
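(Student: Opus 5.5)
This lemma collects the fractional Sobolev--Morrey embeddings for functions in \(W^{s,p}_0(\Omega)\), and the plan is to reduce each of the three estimates to a standard embedding on \(\R^d\). The only care needed is to track the dependence on \(|\Omega|\). Since \(u\in W_0^{s,p}(\Omega)\) vanishes a.e.\ outside \(\Omega\), it lies in \(W^{s,p}(\R^d)\), and all integrals may be taken over \(\R^d\).

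In the subcritical case \(sp<d\), I will invoke the fractional Sobolev inequality on the whole space, \(\|u\|_{L^{p_s^*}(\R^d)}\le C[u]_{W^{s,p}(\R^d)}\) with \(C=C(d,s,p)\). This is proved in \cite[Chapter 7]{Adams}, or alternatively through the Hardy--Littlewood--Sobolev inequality or the Maz'ya--Shaposhnikova argument. Restricting the left-hand side to \(\Omega\) gives \eqref{eq:fractional-Sobolev-subcritical}; no volume factor appears because the inequality is scale invariant.

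In the supercritical case \(sp>d\), I will use the Morrey--Campanato estimate. For a ball \(B_r\) containing a point \(x\), averaging \(|u(x)-u(y)|\) and running the standard telescoping argument over dyadic balls gives \(|u(x)-u(y)|\le C|x-y|^{s-d/p}[u]_{W^{s,p}(\R^d)}\). To convert this into an \(L^\infty\) bound with the volume factor, I take a point \(x\in\Omega\) and choose \(r\) with \(|B_r|=2|\Omega|\). Then at least half of \(B_r(x)\) lies outside \(\Omega\), where \(u=0\). Averaging the oscillation estimate over \(B_r(x)\setminus\Omega\) yields \(|u(x)|\le C r^{s-d/p}[u]\), and raising to the power \(p\) gives exactly the factor \(|\Omega|^{sp/d-1}\) in \eqref{eq:fractional-Morrey}.

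The critical case \(sp=d\) is the step I expect to be the main obstacle, because the correct power \(p/q+sp/d-1=p/q\) must come out. I would handle it by lowering the order of differentiability. For \(t<s\) with \(tp<d\), one has the elementary bound \([u]_{W^{t,p}(\R^d)}^p\le C\rho^{(s-t)p}[u]_{W^{s,p}}^p+C\rho^{-tp}\|u\|_{L^p}^p\), obtained by splitting \(|x-y|<\rho\) from \(|x-y|\ge\rho\). Combining this with the Poincaré-type bound \(\|u\|_{L^p(\Omega)}^p\le C|\Omega|^{sp/d}[u]^p\) (itself proved by the same averaging trick as above) and choosing \(\rho=|\Omega|^{1/d}\), I get \([u]_{W^{t,p}}^p\le C|\Omega|^{(s-t)p/d}[u]_{W^{s,p}}^p\). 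Applying the subcritical case at level \(t\), with \(t\) chosen so that \(p_t^*\ge q\), and then Hölder's inequality on \(\Omega\) produces a volume power. A dimension count shows this power equals \(p/q+sp/d-1\), because each step is scaling consistent. Here \(t\), and hence \(C\), depends on \(q\), which matches the stated dependence.
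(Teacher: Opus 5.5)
Your argument is correct, but it does more than the paper does. The paper gives no proof of this lemma; it quotes the classical embeddings from \cite[Chapter 7]{Adams}.

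For the subcritical case you proceed as the paper does and quote the whole-space inequality. For the other two cases you give a self-contained derivation that tracks the volume factor explicitly, and the bookkeeping checks out:
\begin{itemize}
\item In the supercritical case, $|B_r|=2|\Omega|$ forces $|B_r(x)\setminus\Omega|\ge |B_r|/2$, so $u$ vanishes on a set of positive measure in $B_r(x)$. This gives $|u(x)|^p\le C r^{sp-d}[u]^p$, and $r^{sp-d}\simeq|\Omega|^{sp/d-1}$.
\item In the critical case, the splitting bound with $\rho=|\Omega|^{1/d}$, the Poincar\'e bound, the level-$t$ Sobolev inequality and H\"older on $\Omega$ produce the exponent $p/q-(1-tp/d)+(s-t)p/d=p/q+sp/d-1$, exactly as required. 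The constant depends on $q$ only through $t$.
\end{itemize}

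One clarification is needed. The Poincar\'e bound $\|u\|_{L^p(\Omega)}^p\le C|\Omega|^{sp/d}[u]^p$ cannot come from the pointwise Morrey oscillation estimate, which is unavailable when $sp\le d$. Run the geometric trick directly inside the Gagliardo integral instead: for $x\in\Omega$, $\int_{\R^d\setminus\Omega}|x-y|^{-d-sp}\dd y\ge r^{-d-sp}|B_r(x)\setminus\Omega|\ge c|\Omega|^{-sp/d}$, and this works for every $s$.

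You also correctly read $[u]_{W^{s,p}(\R^d)}$ as the $p$-th root of the double integral. Taken literally, the definition in Section~\ref{s:preli} would make the powers in the statement inconsistent under $u\mapsto cu$.

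As for what each route buys: the citation is economical and suffices for the paper. The lemma is only used with $\Omega=B_1$ in Lemma~\ref{l:comparison-Lq}, where the volume factor is a harmless constant. The explicit $|\Omega|$-dependence for $W^{s,p}_0(\Omega)$ on an arbitrary bounded open set is not the form in which these embeddings are usually stated, though. Your argument actually justifies that dependence using only the whole-space Sobolev inequality, the Morrey--Campanato estimate, and the observation that a ball of volume $2|\Omega|$ centred in $\Omega$ meets $\Omega^c$ in at least half its measure. A rearrangement argument reducing to a ball, followed by scaling, would be the other standard way to get it.
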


We next recall a well-known property of H\"older spaces, for which we provide a sketch of the proof.

\begin{lemma}\label{l:affine-approximation}
Let $\alpha\in(0,1)$, $R>0$, and
$u\in C(B_{2R}(x_0))\cap L^\infty(B_{2R}(x_0))$.
Assume that, for every $x\in B_R(x_0)$ and $0<r\leq R$,
there is an affine function $\ell_{x,r}$ such that
\begin{align}\label{eq:C1alpha_est}
    \|u-\ell_{x,r}\|_{L^\infty(B_r(x))}
 \leq M r^{1+\alpha}.
\end{align}
Then $u\in C^{1,\alpha}(B_R(x_0))$ and
\[
 R\|\nabla u\|_{L^\infty(B_R(x_0))}
 +R^{1+\alpha}[\nabla u]_{C^{0,\alpha}(B_R(x_0))}
 \leq C\left(
 \|u\|_{L^\infty(B_{2R}(x_0))}+MR^{1+\alpha}\right),
\]
where $C$ depends only on $d$ and $\alpha$.
Moreover,
\[
 |\nabla u(x)-\nabla \ell_{x,r}|\leq CM r^\alpha.
\]
\end{lemma}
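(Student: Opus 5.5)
The plan is to extract the gradient from the limit of the slopes of $\ell_{x,r}$ along dyadic radii, and then run the standard Campanato-type comparisons. Fix $x\in B_R(x_0)$ and write $\ell_{x,r}(y)=a_{x,r}+b_{x,r}\cdot(y-x)$.

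\textbf{Step 1 (comparing the affine functions at two scales).} For $r/2\le \rho\le r$ the two approximations overlap on $B_\rho(x)$, so
\[
\|\ell_{x,r}-\ell_{x,\rho}\|_{L^\infty(B_\rho(x))}\le 2Mr^{1+\alpha}.
\]
An affine function bounded by $K$ on a ball of radius $\rho$ has constant term at most $K$ and slope at most $K/\rho$; the slope bound follows by evaluating at $x\pm\rho e$. Hence
\[
|a_{x,r}-a_{x,\rho}|\le CMr^{1+\alpha},\qquad |b_{x,r}-b_{x,\rho}|\le CMr^{\alpha}.
\]

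\textbf{Step 2 (defining the gradient).} Applying Step 1 along $r_k=2^{-k}r$ and summing the geometric series shows that $b_{x,r_k}$ is Cauchy. Its limit $b_x$ does not depend on $r$, since Step 1 covers all intermediate radii, and
\[
|b_x-b_{x,r}|\le CMr^\alpha .
\]
Likewise $a_{x,r}\to u(x)$, because $|u(x)-a_{x,r}|\le Mr^{1+\alpha}$. Therefore
\[
|u(y)-u(x)-b_x\cdot(y-x)|\le |u-\ell_{x,r}|(y)+CMr^{1+\alpha}\le CMr^{1+\alpha}\quad\text{for } |y-x|=r,
\]
so $u$ is differentiable at $x$ with $\nabla u(x)=b_x$. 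This gives the ``moreover'' estimate.

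\textbf{Step 3 (H\"older continuity of the gradient).} This is the main point. Take $x,z\in B_R(x_0)$ with $r=|x-z|$.

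Case $r\le R/2$: both $\ell_{x,2r}$ and $\ell_{z,r}$ approximate $u$ on $B_r(z)\subset B_{2r}(x)$. The argument of Step 1 gives $|b_{x,2r}-b_{z,r}|\le CMr^\alpha$. Combined with Step 2,
\[
|\nabla u(x)-\nabla u(z)|\le CMr^\alpha .
\]

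Case $r>R/2$: it suffices to bound $\|\nabla u\|_{L^\infty}$. At scale $R$ we have
\[
|b_{x,R}|\le \frac{\|\ell_{x,R}\|_{L^\infty(B_R(x))}}{R}\le \frac{\|u\|_{L^\infty(B_{2R}(x_0))}+MR^{1+\alpha}}{R},
\]
using $B_R(x)\subset B_{2R}(x_0)$. Adding $|\nabla u(x)-b_{x,R}|\le CMR^\alpha$ gives
\[
R|\nabla u(x)|\le C\bigl(\|u\|_{L^\infty(B_{2R}(x_0))}+MR^{1+\alpha}\bigr).
\]
Then $|\nabla u(x)-\nabla u(z)|\le 2\|\nabla u\|_\infty\le CR^{-1-\alpha}(\cdots)\,r^\alpha$, since $r>R/2$.

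Together the two cases give the stated $C^{1,\alpha}$ bound. Continuity of $u$ on $B_{2R}(x_0)$ is used only to evaluate pointwise.

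The only delicate point is keeping the nested balls inside the region where \eqref{eq:C1alpha_est} holds, namely centers in $B_R(x_0)$ and radii at most $R$. This is why the cases are split at $r=R/2$ and the sup bound is taken at the top scale $R$.
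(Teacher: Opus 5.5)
Your proof is correct and follows essentially the same route as the paper. Both arguments compare slopes along dyadic scales to define $\nabla u(x)$ with $|\nabla u(x)-\nabla \ell_{x,r}|\le CMr^\alpha$, bound $\nabla u$ from the top scale, and compare the affine approximations at two centers on a common ball of radius comparable to $|x-z|$. The differences are cosmetic: you keep general $R$ instead of rescaling to $R=1$, you skip the normalization $\ell_{x,r}(x)=u(x)$, and you use $B_r(z)\subset B_{2r}(x)$ where the paper uses $B_{r/2}(x)\subset B_r(x)\cap B_r(y)$ with $r=4|x-y|$.
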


\begin{proof}
By translation and scaling, it suffices to consider $x_0=0$
and $R=1$. Adjusting the constant terms, we may assume
$\ell_{x,r}(x)=u(x)$, up to replacing $M$ by $2M$.
Write $a_{x,r}=\nabla \ell_{x,r}$.

Comparing the estimates in \eqref{eq:C1alpha_est} for $\ell_{x,r}$ and $\ell_{x,r/2}$, we obtain
\begin{align*}
    \sup_{B_{r/2}(x)}|(a_{x,r}-a_{x,r/2})\cdot(y-x)| &\leq \sup_{B_{r}(x)}|u(y)-u(x)-a_{x,r}\cdot(y-x)|+\sup_{B_{r/2}(x)}|u(y)-u(x)-a_{x,r/2}\cdot(y-x)|\\
    &\leq CM(r^{1+\alpha}+(r/2)^{1+\alpha})\leq CMr^{1+\alpha},
\end{align*}
which gives
\[
 |a_{x,r}-a_{x,r/2}|\leq CM r^\alpha.
\]
Iterating this argument along dyadic scales $r_j=2^{-j}r$ we find 
\[
|a_{x,r_j}-a_{x,r_{j+1}}|\leq CM 2^{-j\alpha}r^\alpha \quad \text{for all }j\geq0,
\]
and thus $a_{x,r_j}\to a(x)$ for some $a(x) \in \R^d$ with 
\begin{align*}
|a(x)-a_{x,r}|\leq \sum_{j=0}^\infty |a_{x,r_j}-a_{x,r_{j+1}}|\leq CMr^\alpha \sum_{j=0}^\infty 2^{-j\alpha}\leq CMr^\alpha .
\end{align*}
Choosing $j$ such that $r_{j+1}<|h|\leq r_j$, by  \eqref{eq:C1alpha_est}  we obtain 
\[ 
|u(x+h)-u(x)-a(x)\cdot h| \leq CM|h|^{1+\alpha}, 
\] and hence $u$ is differentiable at $x$ with $\nabla u(x)=a(x)$ and
\[
 |\nabla u(x)-a_{x,r}|\leq CM r^\alpha.
\]

Taking $r=1$ also yields
\[
 \|\nabla u\|_{L^\infty(B_1)}
 \leq C\left(\|u\|_{L^\infty(B_2)}+M\right).
\]

If $x,y\in B_1$ and $0<|x-y|\leq1/4$, set $r=4|x-y|$.
Comparing $\ell_{x,r}$ and $\ell_{y,r}$ on
$B_{r/2}(x)\subset B_r(x)\cap B_r(y)$ gives
\[
 |a_{x,r}-a_{y,r}|\leq CM r^\alpha.
\]
Consequently,
\[
 |\nabla u(x)-\nabla u(y)|\leq CM|x-y|^\alpha.
\]
For larger distances, use the gradient supremum bound.
Rescaling proves the stated estimates.
\end{proof}

We also recall the fast convergence lemma.

\begin{lemma}\label{l:fast_convergence}
    Let $(Y_j)_{j}$ be a sequence of positive real numbers satisfying the recursive inequalities
    \[
        Y_{j+1}\leq Cb^{j} Y_j^{1+\beta},
    \]
    where $C,b>1$ and $\beta>0$ are given numbers. If
    \[
        Y_0\leq C^{-1/\beta}b^{-1/\beta^2},
    \]
    then $Y_j\to 0$ as $j\to\infty$.
\end{lemma}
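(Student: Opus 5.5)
This is the classical De Giorgi fast geometric convergence lemma. I will prove by induction the explicit geometric decay
\[
Y_j\le b^{-j/\beta}\,Y_0\qquad\text{for all }j\ge0 .
\]
Since $b>1$ and $\beta>0$, this bound gives $Y_j\to0$.

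The case $j=0$ is trivial. For the inductive step, assume $Y_j\le b^{-j/\beta}Y_0$. The recursion and positivity give
\[
Y_{j+1}\le Cb^j Y_j^{1+\beta}\le Cb^j b^{-j(1+\beta)/\beta}Y_0^{1+\beta}
= C\,b^{-j/\beta}Y_0^{\beta}\,Y_0 .
\]
We want the right-hand side to be at most $b^{-(j+1)/\beta}Y_0$. This is equivalent to
\[
C\,Y_0^\beta\le b^{-1/\beta},
\qquad\text{that is,}\qquad
Y_0\le C^{-1/\beta}b^{-1/\beta^2}.
\]
This is exactly the hypothesis, so the induction closes.

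The only delicate point is choosing the decay rate $b^{-j/\beta}$. It is forced by requiring that the factor $b^j$ be exactly cancelled by the gain $b^{-j}$ coming from the extra power $\beta$. With this choice the smallness condition becomes independent of $j$ and matches the stated threshold.
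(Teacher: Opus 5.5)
Your proposal is correct. The induction $Y_j\le b^{-j/\beta}Y_0$ closes exactly under the stated smallness condition on $Y_0$, and this is the standard proof of the classical fast geometric convergence lemma. The paper only recalls this lemma without proof, so your argument supplies precisely what is being cited.
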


\section{Comparison estimate with same exterior values}
\label{s:comparison}

The following comparison estimate is the main tool of this paper.
It controls the difference of two solutions with the same exterior
values solely in terms of the difference of their right-hand sides. This  is particularly useful for a nonlinear
operator, since the difference of two solutions does not itself
solve the equation with the difference of the source terms.

Taking $g=0$ or $g=f(x_0)$ gives approximation errors of order
$R^{(sp-d/q)/(p-1)}$ and $R^{(sp+\gamma)/(p-1)}$, respectively for $f\in L^q$ and $f\in C^{0,\gamma}$.
These errors are superlinear precisely in the ranges of our main
theorems, allowing us to deduce gradient H\"older estimates from
the corresponding estimates for homogeneous and constant-source
equations. 

The proof uses a Stampacchia truncation argument, see \cite{stampacchia1965probleme}.
For homogeneous replacements, qualitative interior stability under
uniform bounds on the solution and its tail was established in
\cite[Lemma 6.1]{brasco2018higher}.

\begin{lemma}\label{l:comparison-Lq}
Let $p\geq2$, $0<s<1$, and
$\max\{1,d/(sp)\}<q\leq\infty$. Suppose that
$B_R(x_0)\Subset\Omega$ and that
$u\in W^{s,p}_{\mathrm{loc}}(\Omega)\cap L^{p-1}_{sp}(\R^d)$
is locally bounded and solves $\dps u=f$ weakly in $\Omega$, with $f\in L^q(\Omega)$. For every $g\in L^q(B_R(x_0))$ there is a unique
weak solution $h$ of
\[
 \dps h=g\quad\text{in }B_R(x_0),\quad
 h=u\quad\text{in }B_R(x_0)^c,\quad h-u\in W^{s,p}_0(B_R(x_0)).
\]
Moreover,
\begin{equation}\label{eq:comparison-Lq}
 \|u-h\|_{L^\infty(B_R(x_0))}
 \leq CR^{(sp-d/q)/(p-1)}
 \|f-g\|_{L^q(B_R(x_0))}^{1/(p-1)},
\end{equation}
where $C$ depends only on $d,s,p,q$.
\end{lemma}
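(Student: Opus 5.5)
Existence and uniqueness of $h$ come from the direct method. We minimize
\[
 \mathcal{E}(v)=\frac{1}{2p}\iint_{\R^{2d}\setminus(B_R^c\times B_R^c)}\frac{|v(x)-v(y)|^p}{|x-y|^{d+sp}}\,\dd x\,\dd y-\int_{B_R}gv
\]
over $v\in u+W^{s,p}_0(B_R)$, with $B_R=B_R(x_0)$. Because $u$ is locally bounded and lies in $L^{p-1}_{sp}$, the cross terms between $B_R$ and $B_R^c$ are finite. We use $B_R\Subset\Omega$ together with a slightly larger ball on which $u\in W^{s,p}$. The linear term is controlled by Lemma \ref{l:sob_embed}, since $q>d/(sp)$ gives $q'<p_s^*$ (or Morrey/critical embedding). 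Coercivity and strict convexity, the latter from \eqref{eq:convexity}, then give a unique minimizer, and its Euler--Lagrange equation is the weak formulation. Uniqueness among weak solutions also follows from \eqref{eq:monotonicity}.

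For the estimate, we scale. Set $v(x)=(u-h)(x_0+Rx)$ up to normalization, using Lemma \ref{l:tail_scale}-type change of variables. The equation is $(p-1)$-homogeneous and the kernel scales like $R^{-sp}$, so the factor $R^{(sp-d/q)/(p-1)}$ is exactly the scaling of $\|f-g\|_{L^q(B_R)}^{1/(p-1)}$. We may therefore assume $R=1$, $x_0=0$, and by homogeneity $\|f-g\|_{L^q(B_1)}=1$. Write $w=u-h\in W^{s,p}_0(B_1)$.

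For $k\geq0$ test both equations with $\varphi=(w-k)_+$, which is admissible by density since it vanishes outside $B_1$, and subtract. With $a=u(x)-u(y)$ and $b=h(x)-h(y)$, so that $a-b=w(x)-w(y)$, we get
\[
 \iint\bigl(J_p(a)-J_p(b)\bigr)\bigl(\varphi(x)-\varphi(y)\bigr)\,\frac{\dd x\,\dd y}{|x-y|^{d+sp}}=2\int_{A_k}(f-g)(w-k),
\]
where $A_k=\{w>k\}$. The left side is the main obstacle, because $J_p$ is nonlinear and we need a lower bound of the form $c[\varphi]_{W^{s,p}}^p$. I would use the elementary inequality $(J_p(a)-J_p(b))(\tau(a-b))\geq c|\tau(a-b)|^p$, valid pointwise whenever $\tau$ is monotone $1$-Lipschitz with $\tau(0)=0$, applied to $\tau(t)=(t-k)_+$ along the differences. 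This follows from \eqref{eq:monotonicity} after noting $(\varphi(x)-\varphi(y))(w(x)-w(y))\geq(\varphi(x)-\varphi(y))^2$ and checking the sign cases. Granting it, Hölder and the Sobolev embedding (Lemma \ref{l:sob_embed}) give
\[
 [\varphi]^p\lesssim\|\varphi\|_{L^{p^*}}|A_k|^{1-1/q-1/p^*}\lesssim[\varphi]\,|A_k|^{1-1/q-1/p^*}.
\]
For $sp\geq d$, replace $p^*$ by a large finite exponent, or use Morrey directly to get a one-step bound. For $h>k$ one has $(h-k)|A_h|^{1/p^*}\leq\|\varphi\|_{p^*}$, which yields
\[
 |A_h|\leq C(h-k)^{-p^*}|A_k|^{1+\beta},
\]
with $\beta>0$ exactly because $q>d/(sp)$ and $p\geq2$.

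Next, take $k_j=K(1-2^{-j})$ and $Y_j=|A_{k_j}|$. Lemma \ref{l:fast_convergence} applies once $Y_0\leq|B_1|$ is small relative to $K^{p^*}$. This holds for $K$ equal to a constant depending on $d,s,p,q$; the a priori energy bound $[w]^{p-1}\lesssim1$ from testing with $w$ itself gives the starting size. Hence $w\leq K$. Applying the same argument to $-w$, with the roles of $u$ and $h$ exchanged, gives $\|w\|_{L^\infty(B_1)}\leq C$. Undoing the normalization yields \eqref{eq:comparison-Lq}.
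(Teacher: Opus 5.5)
\textbf{The existence step has a genuine gap; the $L^\infty$ estimate is correct and follows the paper's route.} Your estimate uses the same argument as the paper: Stampacchia truncation with $(w-k)_+$, then monotonicity to get $c[\varphi]^p_{W^{s,p}(\R^d)}\le\int_{A_k}|f-g|\varphi$, then H\"older plus Sobolev, the level-set recursion with $k_j=K(1-2^{-j})$ and fast geometric convergence, Morrey when $sp>d$, and finally scaling. Two small corrections. First, the pointwise inequality should be stated with $\delta\varphi=\varphi(x)-\varphi(y)$, not $\tau(a-b)$, since $\varphi(x)-\varphi(y)\neq\tau(w(x)-w(y))$. The facts you actually invoke ($\delta\varphi$ has the sign of $\delta w$ and $|\delta\varphi|\le|\delta w|$) give $(J_p(a)-J_p(b))\delta\varphi\ge 2^{2-p}|\delta w|^{p-1}|\delta\varphi|\ge 2^{2-p}|\delta\varphi|^p$, exactly as in the paper. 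Second, the energy bound from testing with $w$ is not needed to start the iteration, since $Y_0\le|B_1|$ and a large $K$ suffice.

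The gap is in existence. Your claim that local boundedness and $u\in L^{p-1}_{sp}(\R^d)$ make the cross terms finite is false. For $x\in B_R$ and $y$ far from $x_0$ we have $|v(x)-u(y)|^p\ge 2^{1-p}|u(y)|^p-|v(x)|^p$. Hence $\mathcal{E}(v)=+\infty$ for every $v\in u+W^{s,p}_0(B_R)$ as soon as $\int_{\R^d}|u(y)|^p(1+|y|)^{-d-sp}\,\dd y=\infty$. The hypotheses allow this, for example $|u(y)|\sim|y|^\beta$ at infinity with $s\le\beta<sp/(p-1)$, and then the direct method yields nothing. The missing idea is to renormalize the energy, as the paper does, and minimize
\[
\A(w)=\frac1{2p}\iint_{\R^{2d}\setminus(B_R^c\times B_R^c)}\frac{|\delta u+\delta w|^p-|\delta u|^p}{|x-y|^{d+sp}}\,\dd x\,\dd y-\int_{B_R}gw\,\dd x,\qquad w\in W^{s,p}_0(B_R).
\]
By \eqref{eq:energy-difference} the integrand is bounded by $C(|\delta u|^{p-1}|\delta w|+|\delta w|^p)|x-y|^{-d-sp}$. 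The mixed term is integrable using only $u\in W^{s,p}$ on a neighbourhood of $\overline{B_R}$ (via H\"older), together with local boundedness and the $(p-1)$-tail away from $B_R$, where $\delta w=w(x)$. Strict convexity (from \eqref{eq:convexity}), coercivity and the Euler--Lagrange equation then go through as you describe, with $h=u+w$.
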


\begin{proof}

Existence follows from applying the direct method to the following strictly convex, continuous and coercive energy
\[
 \A(w):= \frac1{2p}\iint_{\R^{2d}\setminus({B_R(x_0)}^c\times {B_R(x_0)}^c)}
 \frac{|\delta u+\delta w|^p-|\delta u|^p}{|x-y|^{d+sp}}
 \,\dd x\,\dd y-\int_{B_R(x_0)}gw\,\dd x,
\]
defined for \(w\in W_0^{s,p}(B_R(x_0))\), where  $\delta v:=v(x)-v(y)$.

For more details, we refer to \cite[Proposition 2.12]{brasco2018higher}. Uniqueness follows easily, by taking the difference of two solutions as a test function.

We prove the estimate \eqref{eq:comparison-Lq} on $B_1$. Set $z=u-h$,
$F=\|f-g\|_{L^q(B_1)}$, and, for $k\geq0$, let
$A_k=\{x\in B_1:z(x)>k\}$ and $\psi=(z-k)_+$.
Since $z\in W^{s,p}_0(B_1)$ and $k\geq0$, this is an admissible test. Note that
\[
(\delta z)(\delta \psi)\geq 0,\quad |\psi(x)-\psi(y)|\leq|z(x)-z(y)|.
\]
Thus, applying \eqref{eq:monotonicity}, we get
\begin{align*}
    \iint\frac{(J_p(\delta u)-J_p(\delta h))(\delta\psi)}{|x-y|^{d+sp}}\dd x\dd y
    &=\iint\frac{(J_p(\delta u)-J_p(\delta h))(\delta z)}{|x-y|^{d+sp}}\frac{|\delta \psi|}{|\delta z|}\dd x\dd y\\
    &\geq C\iint \frac{|\delta z|^{p-1}|\delta \psi|}{|x-y|^{d+sp}}\dd x\dd y\geq C[\psi]^p_{W^{s,p}(\R^d)}.
\end{align*}
All the double integrals above are in $\R^{2d}\setminus(B_1^c\times B_1^c)$, and we used the convention that $(\delta z)|\delta \psi| /|\delta z|=0$ whenever $\delta z=0$. Combining this with the equations solved by $u$ and $h$, we obtain
\begin{equation}\label{eq:truncation-Lq}
 c[\psi]_{W^{s,p}(\R^d)}^p
 \leq\int_{B_1}|f-g|\psi\,\dd x.
\end{equation}
If $F=0$, testing also the negative part gives $z=0$. Assume $F>0$.

Suppose first that $sp\leq d$. If $sp<d$, take
$m=p_s^*$ defined in \eqref{eq:fractional-critical-exponent} and if $sp=d$, take $m=2pq'$. This choice implies, by Sobolev embedding of Lemma \ref{l:sob_embed},
\[
\|\psi\|_{L^m(B_1)}^p\leq C[\psi]^p_{W^{s,p}(\R^d)}\leq C\int_{A_k}|f-g|\psi\dd x.
\]
Applying H\"older inequality twice gives
\begin{align*}
    &\int_{A_k}|f-g|\psi \dd x\leq F\left(\int_{A_k} \psi^{q'} \dd x\right)^\frac{1}{q'}\leq F\left(\int \psi^m\dd x\right)^\frac{1}{m}|A_k|^{\frac{1}{q'}-\frac{1}{m}}.
\end{align*}
We used $q'<m$. Combining these estimates, we get 
\[
 \|\psi\|_{L^m(B_1)}^{p-1}
 \leq CF|A_k|^{1-1/q-1/m}.
\]
Moreover, for $t>k$, $A_t\subset A_k$ and $\psi>t-k$ in $A_t$. So
\[
\int_{A_k}\psi^m\dd x\geq |A_t|(t-k)^m.
\]
Therefore,
\[
 |A_t|\leq
 \left(\frac{CF^{1/(p-1)}}{t-k}\right)^m|A_k|^{1+\theta},
 \qquad
 \theta=\frac{m(1-1/q)-1}{p-1}-1>0.
\]
Taking
$k_j=M(1-2^{-j})$, we obtain the recursion
\[
|A_{k_{j+1}}| \leq \left( \frac{2C F^{1/(p-1)}}{M }\right)^m 2^{jm} |A_{k_{j}}|^{1+\theta}.
\]
We want to apply Lemma \ref{l:fast_convergence} to $Y_j:=|A_{k_{j+1}}|$. In order to satisfy the smallness condition on $Y_0$, we take $M=C_1F^{1/(p-1)}$ and $C_1$ sufficiently
large, so that
\[
|A_{k_1}|\leq \left(\frac{4CF^\frac{1}{p-1}}{M}\right)^{m}|B_1|^{1+\theta}=\left(\frac{4C}{C_1}\right)^m|B_1|^{1+\theta}
\]
is small enough. An application of this lemma gives $Y_j\to 0$ and thus $\|z_+\|_\infty\leq C_1F^{1/(p-1)}$, as intended.

If instead $sp>d$, \eqref{eq:fractional-Morrey} and
\eqref{eq:truncation-Lq} with $k=0$ give instead
\[
 \|z_+\|_{L^{\infty}(B_1)}^p
 \leq C[z_+]_{W^{s,p}(\R^d)}^p
 \leq CF\|z_+\|_{L^{q'}(B_1)}
 \leq CF\|z_+\|_{L^{\infty}(B_1)}.
\]
This proves the same estimate. Applying these arguments to $-z$
also bounds its negative part.

Finally, the change of variables $x=x_0+Ry$ replaces the source
difference by $R^{sp}(f-g)(x_0+Ry)$, whose $L^q(B_1)$ norm is
$R^{sp-d/q}\|f-g\|_{L^q(B_R(x_0))}$. This proves
\eqref{eq:comparison-Lq}.
\end{proof}

\section{\texorpdfstring{$C^{1,\alpha}$}{C1,alpha} with source term in \texorpdfstring{$L^q$}{Lq}}
\label{s:measurable}

We prove Theorem~\ref{t:measurable} using the comparison estimate as well as the $C^{1,\alpha_0}$ estimate for the homogeneous equation from \cite{GJS}.

\begin{proof}[Proof of Theorem~\ref{t:measurable}]
Let
\[
A:=\|u\|_{L^\infty(B_2)}
 +{\Tail}_{p-1,sp}(u;2)
 +\|f\|_{L^q(B_2)}^{1/(p-1)},
\]
and assume $0<A<\infty$.
The case $A=0$ is immediate. We first obtain a Lipschitz bound for $u$.

Notice that the range assumption implies \[ sp-\frac dq>p-1>0, \] and hence \(q>d/(sp)\), so that Lemma~\ref{l:comparison-Lq} is applicable.

From \eqref{eq:measurable-exponent}, recall that for $p\geq 2$
\[
\eta_q:=\frac{sp-d/q}{p-1}-1\in(0,1).
\]
Then \cite[Theorem 1.4]{brasco2018higher} gives, for every
$0<\theta<1$,
\begin{equation}\label{eq:almost-Lipschitz-Lq}
 [u]_{C^{0,\theta}(B_{7/4})}\leq C_\theta A.
\end{equation}
Indeed, its limiting H\"older exponent is $\min\{1+\eta_q,1\}=1$. Let $\alpha_0$ be the exponent corresponding to the  $C^{1,\alpha_0}$ regularity from \cite{GJS} and choose
\begin{equation}\label{eq:pick_theta}
 \theta=1-\frac{\alpha_0\eta_q}2\in(0,1).  
\end{equation}
 All constants below may
depend on this choice, hence only on $d,s,p,q$.

Fix $x_0\in B_{3/2}$ and $0<R\leq1/8$. Let $h_R$ be the
homogeneous replacement of $u$ in $B_R(x_0)$, given by
Lemma~\ref{l:comparison-Lq}. Then
\begin{equation}\label{eq:replacement-error-Lq}
 \|u-h_R\|_{L^\infty(B_R(x_0))}
 \leq C R^{1+\eta_q}\|f\|_{L^q(B_R(x_0))}^{1/(p-1)}
 \leq CA R^{1+\eta_q}.
\end{equation}
Combining this with \eqref{eq:almost-Lipschitz-Lq},  we obtain
\begin{equation}\label{eq:replacement-supremum-Lq}
 \begin{aligned}
     \|h_R-u(x_0)\|_{L^\infty(B_R(x_0))}&\leq \|h_R-u\|_{L^\infty(B_R(x_0))}+\|u-u(x_0)\|_{L^\infty(B_R(x_0))}\\
     &\leq CAR^{1+\eta_q}+C_\theta AR^\theta
 \leq CA R^\theta.
 \end{aligned}
\end{equation}
We now bound the tail of $h_R-u(x_0)$ outside $B_R(x_0)$, where $h_R=u$. For distances at most $1/8$, we use \eqref{eq:almost-Lipschitz-Lq}, with $sp>\theta(p-1)$. On the remaining region we use $\|u\|_{L^\infty(B_2)}$ and the tail of $u$, observing that $|y-x_0|\geq|y|/4$ for $y\notin B_2$. Hence we get
\begin{equation}\label{eq:replacement-tail-Lq}
    \begin{aligned}
        {\Tail}_{p-1,sp}(h_R-u(x_0);x_0,R)^{p-1}&=\int_{B^c_{1/8}(x_0)}\frac{|u(z)-u(x_0)|^{p-1}}{|z-x_0|^{d+sp}}\dd z+\int_{B_{1/8}(x_0)\setminus B_R(x_0)}\frac{|u(z)-u(x_0)|^{p-1}}{|z-x_0|^{d+sp}}\dd z\\
        &\leq C\left(\|u\|_{L^\infty(B_2)}^{p-1}+{\Tail}_{p-1,sp}(u;2)^{p-1}\right)+C_\theta A^{p-1}\int_R^{1/8}t^{\theta (p-1)-sp-1}\dd t\\
        &\leq CA^{p-1}R^{\theta(p-1)-sp}.
    \end{aligned}
\end{equation}

Apply \cite[Theorem 1.1]{GJS} to
\[
 H_R(z)=\frac{h_R(x_0+(R/2)z)-u(x_0)}{R^\theta},
\]
which solves
\[
(-\Delta_p)^sH_R(z)=0 \quad \text{ in } B_2,
\]
Moreover, from \eqref{eq:replacement-supremum-Lq} we get $\|H_R\|_{L^\infty(B_2)}\leq CA$ and rescaling \eqref{eq:replacement-tail-Lq} using Lemma \ref{l:tail_scale}, we obtain
\[
{\Tail}_{p-1,sp}(H_R;2)=2^{-\frac{sp}{p-1}}R^{\frac{sp}{p-1}-\theta}{\Tail}_{p-1,sp}(h_R-u(x_0);x_0,R)\leq CA.
\]
Scaling back the $C^{1,\alpha_0}$ estimate for $H_R$ gives
\begin{equation}\label{eq:replacement-gradient-Lq}
 \|\nabla h_R\|_{L^\infty(B_{R/2}(x_0))}
 +R^{\alpha_0}[\nabla h_R]_{C^{0,\alpha_0}(B_{R/2}(x_0))}
 \leq CA R^{\theta-1}.
\end{equation}
Therefore, for $0<r\leq R/2$, we can use \eqref{eq:replacement-error-Lq} and \eqref{eq:replacement-gradient-Lq} to get
\begin{equation}\label{eq:two-radii-Lq}
 \begin{aligned}
     &\sup_{B_r(x_0)}|u(x)-u(x_0)-\nabla h_R(x_0)\cdot(x-x_0)|\\
     &\leq 2\sup_{B_r(x_0)}|u-h_R|+\sup_{B_r(x_0)}|h_R(x)-h_R(x_0)-\nabla h_R(x_0)\cdot(x-x_0)|\\
 &\quad\leq CA\left(R^{1+\eta_q}+r^{1+\alpha_0}R^{\theta-1-\alpha_0}\right).
 \end{aligned}
\end{equation}
We balance the terms in \eqref{eq:two-radii-Lq} by choosing 
\[
 R=r^{(1+\alpha_0)/(2 + \alpha_0 + \eta_q - \theta)},
 \qquad
 \beta= \frac{\alpha_0\eta_q+\theta-1}{2+\eta_q+\alpha_0-\theta}>0,
\]
since $\theta$ is given by \eqref{eq:pick_theta}.
For a sufficiently small $r_*>0$, depending only on the parameters,
$r\leq r_*$ ensures $R\leq1/8$ and $2r\leq R$. Both terms on
the right of \eqref{eq:two-radii-Lq} then equal $r^{1+\beta}$. 

Note however that the obtained slope $\nabla h_R(x_0)$ depends on the scale $r$. Hence we proved only that for every $x_0\in B_{3/2}$ and $r\leq r_*$, there is a
slope $a_{x_0,r}$ such that
\begin{equation}\label{eq:affine-approximation-Lq}
 \sup_{B_r(x_0)}|u(x)-u(x_0)-a_{x_0,r}\cdot(x-x_0)|
 \leq CA r^{1+\beta}.
\end{equation}
For $x_0 \in B_{5/4}$, calling $\ell_{x,r}(y)=u(x)+a_{x,r}\cdot(y-x)$, we can apply Lemma \ref{l:affine-approximation} to conclude
\begin{equation}\label{eq:derived-Lipschitz-Lq}
 \|u\|_{C^{1,\beta}(B_{r_\ast}(x_0))}\leq CA,
 \qquad [u]_{\mathrm{Lip}(B_{r_\ast}(x_0))}\leq CA.
\end{equation}
Since $r_\ast$ depends only on the parameters, a simple covering argument concludes the result in $B_{5/4}$.
 
We can now repeat \eqref{eq:replacement-supremum-Lq} and
\eqref{eq:two-radii-Lq} with $\theta=1$, centers $x_0\in B_1$,
and $R\leq1/8$, using \eqref{eq:derived-Lipschitz-Lq}.
The tail integral is finite with the required bound because
$sp>p-1$. We obtain slopes bounded by $CA$ such that
\[
 \sup_{B_r(x_0)}|u(x)-u(x_0)-\nabla h_R(x_0)\cdot(x-x_0)|
 \leq CA\left(R^{1+\eta_q}+r^{1+\alpha_0}R^{-\alpha_0}\right).
\]
Taking $R=r^{(1+\alpha_0)/(1+\alpha_0+\eta_q)}$ makes both terms equal
$r^{1+\alpha}$, with $\alpha$ as in
\eqref{eq:measurable-exponent}. The same affine approximation
argument proves \eqref{eq:measurable-estimate}.
\end{proof}

\section{\texorpdfstring{$C^{1,\alpha}$}{C1,alpha} with H\"older source term}
\label{s:holder}

We now prove Theorem~\ref{t:holder}. We first refine the estimate from \cite[Theorem 1.1]{GJS} to include constant source term and depend instead on the smaller tail of order $sp+1$. We then freeze the right-hand side and compare
on two different scales.

The following estimate is a consequence of the proof in \cite{GJS}.

\begin{lemma}\label{l:homogeneous-tail}
Let $0<s<1$, $2\leq p<2/(1-s)$ and $c\in \R$. There are
$\alpha_0\in(0,1)$ and $C>0$, depending only on $d,s,p$, such that
every weak solution
$v\in W^{s,p}_{\mathrm{loc}}(B_2)\cap L^{p-1}_{sp}(\R^d)$ of
$\dps v=c$ in $B_2$ satisfies
\begin{equation}\label{eq:homogeneous-tail}
 \|v\|_{C^{1,\alpha_0}(B_1)}
 \leq C\left(\|v\|_{L^\infty(B_2)}
 +{\Tail}_{p-1,sp+1}(v;2)\right).
\end{equation}
Moreover, $\alpha_0$ and $C$ are independent of $c$.
\end{lemma}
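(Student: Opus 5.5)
The plan is to re-run the scheme of \cite{GJS} with two changes. First, carry along the constant source $c$. Second, replace the tail of order $sp$ by the tail of order $sp+1$ at the only points where the tail enters. Throughout I normalize so that
\[
\|v\|_{L^\infty(B_2)}+{\Tail}_{p-1,sp+1}(v;2)\le 1.
\]
The first issue is that a normalization of $v$ alone does not control $c$. So I first show $|c|\le C$. Test the equation with a fixed bump $\varphi\in C^\infty_c(B_{1/2})$, $\varphi\ge0$, $\int\varphi=1$. The left side is a sum of two parts. The local part $\iint_{B_1\times B_1}$ is bounded by the $W^{s,p}(B_1)$ energy, which the Caccioppoli inequality controls by the $L^\infty$ bound and the tail. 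Since $|y|^{-d-sp}\le C|y|^{-d-sp-1}$ for $|y|\ge2$, the order-$sp$ tail is dominated by the order-$(sp+1)$ tail. The nonlocal part is bounded by the same quantities. This gives $|c|\le C$.

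\emph{Step 1 (Lipschitz bound).} The constant source does not obstruct the Lipschitz estimates of \cite{GJS} and \cite{biswas2025lipschitz}, since a bounded right-hand side is admissible there. Because $p<2/(1-s)$ gives $sp>p-2$, these yield $\|\nabla v\|_{L^\infty(B_{3/2})}\le C$, with constants depending on $|c|$ and hence only on $d,s,p$.

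\emph{Step 2 (improvement of flatness).} The core of \cite{GJS} is a compactness and iteration argument. At each scale $r_k=\rho^k$ one subtracts an affine function $\ell_k$ and rescales:
\[
v_k(x)=\frac{(v-\ell_k)(r_kx)}{r_k^{1+\alpha_0}}.
\]
Two features of this rescaling matter. The equation becomes
\[
\dps v_k=r_k^{sp-(1+\alpha_0)(p-1)}\,c'
\]
(after accounting for the affine correction, the operator is evaluated on $v_k+$ a linear function of size $r_k^{-\alpha_0}$), and the source coefficient tends to $0$ provided $\alpha_0<sp/(p-1)-1$, or is at least bounded. The compactness step is stable under bounded sources converging to $0$, so I would simply shrink $\alpha_0$ if needed.

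The tail of $v-\ell_k$ is where order $sp+1$ enters. Affine functions grow linearly, and
\[
\int_{B_2^c}\frac{|y|^{p-1}}{|y|^{d+sp}}\,\dd y
\]
converges only when $sp>p-1$, which fails in part of the range. I expect, however, that the proof in \cite{GJS} only ever uses the tail through differences $|\ell(y)|\,|y|^{-d-sp}$ measured against growth of $v-\ell$. Under the Lipschitz bound, $|v(y)-v(0)|\le C|y|$ near the origin, and far away $|v(y)|$ is controlled by the weighted integral with weight $|y|^{-d-sp-1}$. I would check each place where the tail appears and bound it by $\|v\|_{L^\infty}+{\Tail}_{p-1,sp+1}$, using
\[
|v(y)|^{p-1}|y|^{-d-sp}\le |v(y)|^{p-1}|y|^{-d-sp-1}\,|y|
\]
combined with the Lipschitz information on $B_2$.

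\emph{Main obstacle.} The main difficulty is this bookkeeping of the tail through the iteration: the rescaled tails of $v_k$ must remain uniformly bounded using only the order-$(sp+1)$ tail. Concretely, I would split each tail integral into the region $\{r_k\le|y|\le2\}$, where the $C^{1,\alpha_0}$ control from earlier steps bounds $|v-\ell_k|$, and the region $\{|y|>2\}$, where the extra $|y|^{-1}$ decay absorbs the linear growth of $\ell_k$. The resulting geometric sums converge because
\[
sp>(1+\alpha_0)(p-1)-1,
\]
which holds for $\alpha_0$ small since $sp>p-2$. Once the rescaled tails are uniformly bounded, Lemma \ref{l:affine-approximation} converts the iteration into \eqref{eq:homogeneous-tail}. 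The constants are independent of $c$ because only the bound $|c|\le C$ from the initial testing argument was used.
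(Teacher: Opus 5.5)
Your proposal has a genuine gap, and it starts from a reversed inequality. For $|y|\geq 2$ one has $|y|^{-d-sp-1}\leq \tfrac12|y|^{-d-sp}$. So ${\Tail}_{p-1,sp+1}(v;2)\leq C\,{\Tail}_{p-1,sp}(v;2)$, and not the other way round. The order-$(sp+1)$ tail is the \emph{weaker} quantity, and the point of the lemma is that only it appears. Your normalization therefore does not control ${\Tail}_{p-1,sp}(v;2)$. That tail enters both the Caccioppoli inequality and the nonlocal part of the tested equation, so your argument for $|c|\leq C$ breaks down.

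The claim $|c|\leq C$ is in fact false. Let $S_L=\{L\le|y|\le 2L\}$. Set $v=M$ on $S_L$ and $v=0$ on the rest of $B_2^c$, with $M^{p-1}=L^{sp+1}$. Solve $\dps v=c_L$ in $B_2$ with $c_L=-M^{p-1}\int_{S_L}|y|^{-d-sp}\,dy$, so that $|c_L|\approx L$. On $S_L$ we have $|x-y|^{-d-sp}=|y|^{-d-sp}(1+O(1/L))$, so the error is $O(1)$. Comparison with the barriers $\pm K$ in $B_2$, for a fixed large $K$, then gives $\|v\|_{L^\infty(B_2)}\leq K$. Meanwhile ${\Tail}_{p-1,sp+1}(v;2)\approx 1$ and $|c_L|\to\infty$.

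For the same reason Step 1 is unsupported. The Lipschitz estimates you cite depend on ${\Tail}_{p-1,sp}$ and on $|c|$, and neither is controlled by your normalization.

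Second, even granting $|c|\le C$, your rescaled source $r_k^{sp-(1+\alpha_0)(p-1)}c$ is unbounded as $k\to\infty$ whenever $sp<p-1$, i.e.\ $p>1/(1-s)$. This lies inside the range $p<2/(1-s)$; for instance $p=2$ with $s<1/2$. Shrinking $\alpha_0$ does not help, since the exponent is already negative at $\alpha_0=0$. This is exactly the regime where the lemma is used in Theorem~\ref{t:holder}: there $H_R$ has source $2^{-sp}R^{sp-p+1}f(x_0)$, which is unbounded as $R\to0$. So independence of $c$ cannot come from a bound on $c$; it has to be structural.

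That is what the paper uses. In \cite{GJS} the equation enters only after being differentiated (first and third stages), or through $\dps v(\bar x)-\dps v(\bar y)=c-c=0$ in the Ishii--Lions stage. Hence $c$ never appears. The paper then takes from \cite[Sections 8--9]{GJS} the bound $\|v\|_{C^{1,\alpha_0}(B_1)}\le C(\|v\|_{\mathrm{Lip}(B_1)}+{\Tail}_{p-1,sp+1}(v;1))$. This already contains the order-$(sp+1)$ tail bookkeeping you planned to redo. The Lipschitz norm is then removed by interpolation, $\|v\|_{\mathrm{Lip}(B_1)}\le\varepsilon\|v\|_{C^{1,\alpha_0}(B_1)}+C_\varepsilon\|v\|_{L^\infty(B_1)}$, followed by absorption. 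Finally ${\Tail}_{p-1,sp+1}(v;1)\le{\Tail}_{p-1,sp+1}(v;2)+C\|v\|_{L^\infty(B_2)}$. No a priori Lipschitz bound and no bound on $c$ is ever needed.
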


\begin{proof}
We start by remarking that the whole argument of \cite{GJS} can be repeated for equations with constant right-hand side, and the estimate obtained is independent of this constant. More precisely, in the first stage of the proof, the equation is differentiated and therefore the constant disappears immediately. The second stage consists of an Ishii-Lions argument, where two points $\bar x, \bar y$ are constructed and the only instance where we use the equation is to evaluate 
\[
\dps u(\bar x)-\dps u(\bar y)=c-c=0,
\]
hence here also the constant disappears. In the third stage, as in the first, we differentiate the equation. Therefore, the result of \cite[Theorem 1.1]{GJS} holds also for constant right-hand sides.

The desired bound follows immediately by interpolation from the following bound, established in the proof of Theorem 1.1 in \cite[Sections 8--9]{GJS}
\[
\|v\|_{C^{1,\alpha_0}(B_1)}\leq C (\| v\|_{\mathrm{Lip}(B_{1})}+{\Tail}_{p-1,sp+1}(v;1) ).
\]
Indeed, by interpolation, we have for every $\varepsilon>0$,
\[
\|v\|_{\mathrm{Lip}(B_1)}\leq \varepsilon\|v\|_{C^{1,\alpha_0}(B_1)}+C_\varepsilon\|v\|_{L^\infty(B_1)}.
\]
Choosing $\varepsilon>0$ sufficiently small, the first term is absorbed into the left-hand side.
Moreover, 
\[
{\Tail}_{p-1,sp+1}(v;1)\leq {\Tail}_{p-1,sp+1}(v;2)+C\|v\|_{L^\infty(B_2)}.
\]
Hence \eqref{eq:homogeneous-tail} follows from combining these three estimates.
\end{proof}

We prove the second main theorem.

\begin{proof}[Proof of Theorem~\ref{t:holder}]

Denote
\[
A:=\|u\|_{L^\infty(B_2)}
 +{\Tail}_{p-1,sp}(u;2)
 +\|f\|_{C^{0,\gamma}(B_2)}^{1/(p-1)}.
\]
We assume $0<A<\infty$; the case $A=0$
is immediate.

The assumptions $sp+\gamma>p-1$ and $\gamma\leq1$ imply
$sp>p-1-\gamma\geq p-2$. Thus \cite[Theorems 1.1--1.2]{biswas2025improved} apply and give
\begin{equation}\label{eq:lipschitz}
 [u]_{\mathrm{Lip}(B_{3/2})}\leq CA.
\end{equation}
For the quantitative bound, apply that result to $u/A$ and
$f/A^{p-1}$, whose data are bounded by one, and use a fixed interior
covering. The constants depend only on $d,s,p,\gamma$.

Fix $x_0\in B_{5/4}$ and $0<R\leq1/4$. Let $h_R$ be the solution of
\[
 \dps h_R=f(x_0)\quad\text{in }B_R(x_0),\qquad
 h_R=u\quad\text{in }B_R(x_0)^c,
\]
provided by Lemma~\ref{l:comparison-Lq} with $q=\infty$. By the
H\"older continuity of $f$ and \eqref{eq:comparison-Lq}, we get
\begin{equation}\label{eq:freezing-error}
    \begin{aligned}
 \|u-h_R\|_{L^\infty(B_R(x_0))}
 &\leq CR^{sp/(p-1)}
 \|f-f(x_0)\|_{L^\infty(B_R(x_0))}^{1/(p-1)}\\
 &\leq C[f]_{C^{0,\gamma}(B_2)}^{1/(p-1)}
 R^{(sp+\gamma)/(p-1)}
 \leq CA R^{1+\eta_\gamma}.
\end{aligned}
\end{equation}
This, together with \eqref{eq:lipschitz}, gives
\begin{equation}\label{eq:replacement-supremum}
 \|h_R-u(x_0)\|_{L^\infty(B_R(x_0))}\leq CA R.
\end{equation}

We next estimate its exterior values. Since $h_R=u$ outside the
ball, arguing as in \eqref{eq:replacement-tail-Lq} using \eqref{eq:lipschitz} yields
\begin{equation}\label{eq:replacement-tail}
    \begin{aligned}
 {\Tail}_{p-1,sp+1}&(h_R-u(x_0);x_0,R)^{p-1}
 =\int_{B_R(x_0)^c}
 \frac{|u(y)-u(x_0)|^{p-1}}{|y-x_0|^{d+sp+1}}\,\dd y\\
 &\leq CA^{p-1}
 \left(\int_R^{1/4}t^{p-sp-3}\,\dd t+1\right)
 \leq CA^{p-1}R^{p-sp-2}.
\end{aligned}
\end{equation}
Consider
\[
 H_R(z)=\frac{h_R(x_0+(R/2)z)-u(x_0)}{R}.
\]
Then
\[
(-\Delta_p)^sH_R(z)=2^{-sp}R^{sp-p+1}f(x_0)\quad \text{ for } z\in B_2.
\]
By
\eqref{eq:replacement-supremum}--\eqref{eq:replacement-tail} and Lemma \ref{l:tail_scale}, we have
\[
 \|H_R\|_{L^\infty(B_2)}
 +{\Tail}_{p-1,sp+1}(H_R;2)\leq CA.
\]
Hence, Lemma~\ref{l:homogeneous-tail}
gives
\begin{equation}\label{eq:replacement-gradient}
 \|\nabla h_R\|_{L^\infty(B_{R/2}(x_0))}
 +R^{\alpha_0}[\nabla h_R]_{C^{0,\alpha_0}(B_{R/2}(x_0))}
 \leq CA.
\end{equation}

For $0<r\leq R/2$, combining
\eqref{eq:freezing-error} with Taylor's formula and
\eqref{eq:replacement-gradient}, we obtain
\begin{equation}\label{eq:two-radii}
 \sup_{B_r(x_0)}|u(x)-u(x_0)-\nabla h_R(x_0)\cdot(x-x_0)|
 \leq CA\left(R^{1+\eta_\gamma}+R^{-\alpha_0}r^{1+\alpha_0}\right).
\end{equation}
Choose
\[
 R=r^{(1+\alpha_0)/(1+\alpha_0+\eta_\gamma)}.
\]
There is a universal $r_*>0$ such that $r\leq r_*$ implies
$R\leq1/4$ and $2r\leq R$. For this choice, both terms in
\eqref{eq:two-radii} equal $r^{1+\alpha}$, with $\alpha$ as in
\eqref{eq:exponent}. Thus, for every $x_0\in B_{5/4}$ and $r\leq r_*$,
there is a vector $a_{x_0,r}$ such that
\begin{equation}\label{eq:affine-approximation}
 |a_{x_0,r}|\leq CA,\qquad
 \sup_{B_r(x_0)}|u(x)-u(x_0)-a_{x_0,r}\cdot(x-x_0)|
 \leq CA r^{1+\alpha}.
\end{equation}

The affine approximation result of Lemma \ref{l:affine-approximation} applied for $x_0\in B_{1}$ to
\eqref{eq:affine-approximation}, gives
\[
 \|\nabla u\|_{L^\infty(B_{r_\ast}(x_0))}+[\nabla u]_{C^{0,\alpha}(B_{r_\ast}(x_0))}\leq CA.
\]
Since $r_\ast$ depends only on the parameters, a simple covering argument gives the same result in $B_1$.
Together with the bound for $u$, this proves
\eqref{eq:main-estimate}.
\end{proof}

\bigskip

\noindent\textbf{Acknowledgements. } The authors would like to thank Luis Silvestre for fruitful conversations on the topic of this paper. DG is a  member of the Gruppo Nazionale per l'Analisi Matematica, la
Probabilità e le loro applicazioni (GNAMPA) - INdAM. D.J. is supported by the European Research Council, through the ERC StG project NEW, No.~101220121.

\bigskip

\noindent\textbf{AI declaration. } The authors started working on this problem soon after the paper \cite{GJS} was released. They developed a draft of the manuscript with bounded right-hand side, before using ChatGPT Astra 6.0. Subsequent discussions with the AI led to the current formulation of Lemma~\ref{l:comparison-Lq}, which allowed the authors to simplify the original argument. ChatGPT Astra 6.0 was also used to assist with the revision and final preparation of the manuscript. All mathematical statements, proofs, and computations were independently verified by the authors, who take full responsibility for the content of the paper.

\bibliographystyle{plain}
\bibliography{bib_GJ}
{\raggedright\Addresses\par}
\end{document}